\newtheorem{theorem}{Theorem}
\newtheorem{lemma}{Lemma}[section]
\newtheorem{corollary}{Corollary}
\theoremstyle{definition}
\newtheorem{definition}{Definition}
\theoremstyle{remark}
\newtheorem{remark}{Remark}
\theoremstyle{example}
\newcommand{\RNum}[1]{\uppercase\expandafter{\romannumeral #1\relax}}
\begin{document}

\numberwithin{equation}{section}

\title{Some Rigorous Results on the Phase Transition of Finitary Random Interlacement}

\author{Zhenhao Cai}
\address[Zhenhao Cai]{Peking University}
\email{caizhenhao@pku.edu.cn}

\author{Yuan Zhang}
\address[Yuan Zhang]{Peking University}
\email{zhangyuan@math.pku.edu.cn}
\urladdr{https://www.math.pku.edu.cn/teachers/zhangyuan/eindex.html}
\maketitle
	
\begin{abstract}
In this paper, we show several rigorous results on the phase transition of Finitary Random Interlacement (FRI). For the high intensity regime, we show the existence of a  critical fiber length, and give the exact asymptotic of it as intensity goes to infinity. At the same time, our result for the low intensity regime proves the global existence of a non-trivial phase transition with respect to the system intensity.
\end{abstract}
	
\section{Introduction}

The model of finitary random interlacements (FRI) was first introduced by Bowen \cite{bowen2019finitary} to solve a special case
of the Gaboriau-Lyons problem. Intuitively speaking, FRI (denoted by $\mathcal{FI}^{u,T}$) can be seen a ``cloud of twisted yarn" composed of finite `` fibers" on $\mathbb{Z}^d$. The fibers within this system each form a geometrically killed simple random walk, where the starting points are sampled according to a Poisson point process with intensity measure  proportional to system intensity $u$ and inversely proportional to $T+1$. At the same time, the expected length of each fiber is given by $T$ \cite{bowen2019finitary,cai2020chemical,cai2020non,procaccia2019percolation}. See Section \ref{sec2} for precise definitions.

In \cite{bowen2019finitary}, Bowen proved that as $T\to +\infty$ , $\mathcal{FI}^{u,T}$ converges under weak star topology to the celebrated random interlacements (RI) introduced by Sznitman \cite{sznitman2010vacant}, with intensity $u$. FRI can also be seen as a finitary version of RI, in the sense that it can be defined as massive interlacements on a graph equipped with killing measure (see Chapter V of \cite{prevost2020percolation}). 

On the other hand, unlike the classic RI, whose trajectories always form an a.s. connected network \cite{ vcerny2012internal, sznitman2010vacant}, the collection of edges traversed by FRI on  $\mathbb{Z}^d$ has exhibited a non-trivial percolative phase transition \cite{procaccia2019percolation}. And since there are now two parameters to play with, one may characterize such transition from either of the following viewpoints:

(1) One may first fix the intensity $u$ and examine the evolution of FRI with respect to $T$. This is also the setting Bowen used in his first paper \cite{bowen2019finitary}. In addition to the convergence when $T\to\infty$, he also proved that the FRI on non-amenable graphs will a.s. have infinite connected cluster(s) for all sufficiently large $T$. He proposed same question for FRI on $\mathbb{Z}^d$, which was affirmatively answered in \cite{procaccia2019percolation} by proving the existence of the following phase transition with respect to $T$: For any $d\ge 3$ and $u>0$, there is $T_0,T_1\in(0,\infty)$, such that $\mathcal{FI}^{u,T}$ a.s. has no infinite cluster for all $T<T_0$, and a.s. has a unique infinite cluster for all $T>T_1$ (see Theorem 1, 2 in \cite{procaccia2019percolation} for details). The geometric properties for the infinite cluster, such as local uniqueness and order of chemical distance, for sufficiently large $T$ was later obtained in \cite{cai2020chemical}. However, as mentioned in Section 1.1 of \cite{procaccia2019percolation}, the uniqueness of phase transition and the existence/uniqueness of a critical fiber length $T_c$ remain open. Unlike the case for RI, this turns out to be a non-trivial problem, since it was recently proved that there is no global stochastic monotonicity with respect to $T$ for $\mathcal{FI}^{u,T}$, as shown in Theorem 1, \cite{cai2020non}. On the other hand, numerical tests in Section 5 of \cite{cai2020non} provide evidences on the existence and uniqueness of $T_c$, which was also conjectured in Conjecture 5 \cite{cai2020non} to be aymptotically an inverse linear function with respect to $u$.

(2) As in \cite{prevost2020percolation}, one may also fix $T$ and examine the evolution of FRI with respect to its intensity $u$. Note that FRI is by definition monotone with respect to $u$. So the question of interest here is whether there is always a non-trivial phase transition.  I.e., for a fixed $T$, we want to show FRI does not percolate for all sufficiently small $u$ and percolate for all sufficiently large $u$. \cite{prevost2020percolation} proved this for all $T$ small enough, and conjectured it can be extended to all $T\in (0,\infty)$. 

In this paper, we prove that, despite lacking global monotonicity, FRI is stochastically increasing with respect to $T$ for all $T\in (0,1)$, which implies the existence and uniqueness of $T_c$ for all sufficiently large $u$. Meanwhile, we also show that the upper bound of $T_c$ found by \cite{cai2020non, prevost2020percolation} in the high intensity regime is actually sharp, and give an exact asymptotic of $T_c$ as $u\to\infty$. Moreover, for the low intensity regime, we prove a polynomial lower bound for the phase diagram, which at the same time proves the conjecture on the global existence of a non-trivial phase transition with respect to $u$. Our proofs are largely based on the ``decoupling" methods first invented in \cite{sznitman2012decoupling}.

This paper is organized as follows. In Section \ref{sec2} we introduce precise definitions of FRI together with some necessary notations and preliminaries. We state our main results in Section \ref{sec3}.  Local monotonicity is shown in Section \ref{sec5}. And at last we estimate asymptotic of critical values in Section \ref{sec4}.

\section{Notations and preliminaries}\label{sec2}

\textbf{Some basic notations:} In this paper, we denote the $l^\infty$ distance and Euclidean distance by $|\cdot|$ and $|\cdot|_2$ respectively. We also denote the undirected edge set of $\mathbb{Z}^d$ by $\mathbb{L}^d$ (i.e. $\mathbb{L}^d:=\left\lbrace \{x,y\}:x,y\in \mathbb{Z}^d,|x-y|_2=1 \right\rbrace $). For any sets $A,B\subset \mathbb{Z}^d$, the $l^\infty$ distance between them is defined as $d(A,B):=\min\{|x-y|:x\in A,y\in B\}$. For finite subset $D\subset \mathbb{Z}^d$, let $\partial D:=\{x\in D:\exists y\in \mathbb{Z}^d\setminus D\ such\ that\ \{x,y\}\in \mathbb{L}^d \}$ be its inner boundary and $|D|$ be the cardinality of $D$, without casusing further confusion.

\textbf{Connection between two sets:} For sets $A,B\subset \mathbb{Z}^d$ and a collection of edges in $\mathbb{L}^d$ denoted by $E$, we say $A$ and $B$ are connected by $E$ (written by $A\xleftrightarrow[]{E} B$) iff there exists a sequence of vertices $(x_0,...,x_n)$ such that $x_0\in A$, $x_n\in B$ and that for any $ 0\le i\le n-1$, $\{x_i,x_{i+1}\}\in E$.

\textbf{Statements about constants:} we will use $c, c_1, c_2,...$ as local constants
(“local” means their values may vary according to contexts) and $C,C_1,C_2,...$ as global
constants (“global” means constants will keep their values in the whole paper).

\textbf{Random walks and relative stopping times:} We denote the law of simple random walks starting from $x$ on $\mathbb{Z}^d$ by $P_x$ and the law of geometrically killed simple random walks starting from $x$ with killing rate $\frac{1}{T+1}$ at each step by $P_x^{(T)}$.

For a random walk $\{X_i\}_{n=0}^{\infty}$ and $A\subset \mathbb{Z}^d$, define the hitting time and entrance time as $H_A=\min\{k\ge 0:X_k\in A\} $ and $\bar{H}_A=\min\{k\ge 1:X_k\in A\}$. Here we set $\min \emptyset=\infty$.

\textbf{Capacity with killing measure:} For any $K\subset \mathbb{Z}^d$, the escaping probability is defined as $Es_K^{(T)}(x):=P_x^{(T)}\left(\bar{H}_K=\infty\right) $. Then the capacity of $K$ with killing measure $\frac{1}{T+1}$ is $$cap^{(T)}\left(K\right):=2d*\sum_{x\in K}Es_K^{(T)}(x).$$
In particular, we have \begin{equation}
	cap^{(T)}\left(K\right)\le 2d*|K|.
\end{equation}

\textbf{Definitions of FRI:}  According to \cite{procaccia2019percolation}, FRI has two equivalent definitions. Denote the set of all finite nearest-neighbor paths on $\mathbb{Z}^d$ by $W^{\left[0,\infty \right) }$. Then $v^{(T)}:=\sum_{x\in \mathbb{Z}^d}\frac{2d}{T+1}P_x^{(T)}$ is a $\sigma-$finite measure  on $W^{\left[0,\infty \right) }$.
\begin{definition}\label{def1}
	For $0<u,T<\infty$, finitary random interlacements $\mathcal{FI}^{u,T}$ is the Poisson point
	process with intensity measure $u*v^{(T)}$. We denote the law of $\mathcal{FI}^{u,T}$ by $P^{u,T}$.
\end{definition}

\begin{definition}\label{def2}
	Let $\{N_x\}_{x\in \mathbb{Z}^d}\overset{i.i.d.}{\sim} Pois(\frac{2du}{T+1})$. For each site $x\in \mathbb{Z}^d$, start $N_x$ independent geometrically killed simple random walks with law $P_x^{(T)}$. Let $\mathcal{FI}^{u,T}$ be the point measure on $W^{\left[0,\infty \right) }$ composed of all the trajectories above starting from all $x\in \mathbb{Z}^d$.
\end{definition}
With a slight abuse of notations, for $\eta\in W^{\left[0,\infty \right) }$, we may write $\eta\in \mathcal{FI}^{u,T}$ if $\mathcal{FI}^{u,T}(\eta)=1$. In this paper, $\mathcal{FI}^{u,T}$ is also regarded as a bond percolation model on $\mathbb{L}^d$. I.e. we say an edge $e\in \mathbb{L}^d$ is open iff there exists a path in $\mathcal{FI}^{u,T}$ containing $e$ and then write $\mathcal{FI}^{u,T}(e)=1$ (otherwise, $\mathcal{FI}^{u,T}(e)=0$). For the simplicity of notations, for any $A,B\subset \mathbb{Z}^d$, we denote \begin{equation}
		A\xleftrightarrow[]{\mathcal{FI}^{u,T}}B:=A\xleftrightarrow[]{\left\lbrace e\in \mathbb{L}^d:\mathcal{FI}^{u,T}(e)=1\right\rbrace }B.
	\end{equation} 

We say $\mathcal{FI}^{u,T}$ percolates iff there exists an infinite connected cluster composed of open edges. It has been proved in Theorem 2, \cite{cai2020non} that $\mathcal{FI}^{u,T}$ contains at most one open infinite connected cluster.  
 
\textbf{FRI on a finite set:} Let $K$ be a finite subset of $\mathbb{Z}^d$. For each path $\eta_i$ in $\mathcal{FI}^{u,T}$, we denote the part of $\eta_i$ after intersecting $K$ by $\hat{\eta}_i^K$. Precisely, for $\eta_i=(\eta_i(0),...,\eta_i(l_i))$, define  $\hat{\eta}_i^K:=\left(\eta_i(H_K),...,\eta_i(l_i)\right)$ if $H_K<\infty$ and $\hat{\eta}_i^K:=\emptyset$ if $H_K=\infty$. By Lemma 2.1 of \cite{procaccia2019percolation}, $\sum_{i}\delta_{\hat{\eta}_i^K}$ has the same law as a Poisson point process with intensity measure $u*\sum_{x\in K}Es_K^{(T)}(x)P_x^{(T)}$. As a direct corollary, the number of paths intersecting $K$ in $\mathcal{FI}^{u,T}$ is a Poisson random variable with parameter $u*cap^{(T)}\left(K\right)$.

\textbf{Independence in FRI:} For disjoint sets $A_1,A_2,...,A_m\subset \mathbb{Z}^d$, if a sequence of events $E_1,E_2,...,E_m$ satisfying that for all $1\le i\le m$, $E_i$ only depends on the paths in $\mathcal{FI}^{u,T}$ starting from $A_i$. By Definition \ref{def2}, $E_1,E_2,...,E_m$ are independent.

Now we give the definition of critical values of FRI:
\begin{definition}
	For $u>0$,  $d\ge 3$, we define that $$T_c^-(u,d):=\sup\{T_0>0:\forall 0<T<T_0,\mathcal{FI}^{u,T}\ does\ not\  percolate\} $$ and $$T_c^+(u,d):=\inf\{T_0>0:\forall T>T_0,\mathcal{FI}^{u,T}\ percolates\}.$$
\end{definition}

\begin{remark}
	By Theorem 1 and 2 of \cite{procaccia2019percolation}, it has been peroved that for all $d\ge 3$, $0<T_c^-(u,d)\le T_c^+(u,d)<\infty $. 
\end{remark}

\section{Main results}\label{sec3}

In our first result, we show that for all $d\ge 3$ and $u>0$,  although $\mathcal{FI}^{u,T}$ does not enjoy monotonicity for larger $T$'s (see Theorem 1, \cite{cai2020non}), it is indeed stochastically increasing with respect to $T$ for all $T\in (0,1]$.

\begin{theorem}\label{mono}
	For any $u>0$, $0<T_1<T_2<\infty$ such that $T_1*T_2\le 1$, then $\mathcal{FI}^{u,T_2}$ stochasitcally dominates $\mathcal{FI}^{u,T_1}$. I.e. there is a coupling between $\mathcal{FI}^{u,T_1}$ and $\mathcal{FI}^{u,T_2}$ such that almost surely for any edge $e\in \mathbb{L}^d$, $\mathcal{FI}^{u,T_1}(e)\le \mathcal{FI}^{u,T_2}(e)$.
\end{theorem}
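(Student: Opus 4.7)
The plan is to build an explicit monotone coupling by decomposing each trajectory of $\mathcal{FI}^{u,T}$ according to whether its length is zero (a single vertex, contributing no edges) or positive (at least one edge). Only positive-length walks carry any edges, so matching positive-length walks across $T_1$ and $T_2$ will suffice for edge-wise domination. By Poisson thinning, at each $x\in\mathbb{Z}^d$ the walks of $\mathcal{FI}^{u,T}$ starting at $x$ split into two independent Poisson sub-processes: zero-length walks with rate $2du/(T+1)^2$, and positive-length walks with rate $2du\cdot T/(T+1)^2=:2du\,f(T)$. A direct factorisation gives
\begin{equation*}
f(T_2)-f(T_1)=\frac{(T_2-T_1)(1-T_1T_2)}{(T_1+1)^2(T_2+1)^2},
\end{equation*}
so for $T_1<T_2$ the inequality $f(T_1)\le f(T_2)$ holds if and only if $T_1T_2\le 1$. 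This is precisely the hypothesis of the theorem and the only place it enters: it guarantees that there are (stochastically) at least as many positive-length $T_2$-walks as positive-length $T_1$-walks at every vertex.

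Next I set up a per-trajectory coupling. A positive-length $T$-walk from $x$ decomposes as $(x,y)\cdot W$, where $y$ is a uniformly random neighbour of $x$ and $W$ is an unconditional $T$-walk from $y$. For two walks at the same starting vertex $y$, I use the classical monotone coupling of geometric random variables to realise kill times $K_1\le K_2$ with parameters $1/(T_1+1)$ and $1/(T_2+1)$ on a single underlying simple random walk $(Z_0=y,Z_1,Z_2,\dots)$, so that the $T_1$-walk $(y,Z_1,\dots,Z_{K_1})$ is a prefix of the $T_2$-walk $(y,Z_1,\dots,Z_{K_2})$ and in particular their edge sets are nested.

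The full coupling is then assembled as follows. At each $x$, sample independent Poisson variables $M_x\sim\mathrm{Pois}(2du f(T_1))$, $R_x\sim\mathrm{Pois}(2du(f(T_2)-f(T_1)))$, and $Z_x^{(i)}\sim\mathrm{Pois}(2du/(T_i+1)^2)$ for $i=1,2$. Each of the $M_x$ ``matched'' walks independently draws its own neighbour $y$ and coupled pair $(W^{(T_1)},W^{(T_2)})$, producing the $T_i$-trajectory $(x,y)\cdot W^{(T_i)}$. Each of the $R_x$ ``extra'' walks contributes one independent $T_2$-trajectory of the same form, and the $Z_x^{(i)}$ walks are just copies of $(x)$. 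A short intensity calculation shows that the collection of $T_i$-trajectories produced this way has intensity measure $\sum_x\frac{2du}{T_i+1}P_x^{(T_i)}=u\,v^{(T_i)}$ on path space, and by independence of the ingredients it is a Poisson point process; hence by Definition \ref{def1} it has the law of $\mathcal{FI}^{u,T_i}$. Since for each matched pair the $T_1$-walk is a prefix of the $T_2$-walk, and neither the extra $T_2$-walks nor the zero-length walks contribute to the $T_1$-side, we conclude $\mathcal{FI}^{u,T_1}(e)\le \mathcal{FI}^{u,T_2}(e)$ almost surely for every $e\in\mathbb{L}^d$.

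The main obstacle is identifying the correct decomposition. The full walk-rate $2du/(T+1)$ is \emph{decreasing} in $T$, so a naive ``inject $T_1$-walks into $T_2$-walks'' coupling cannot work, which is morally why \cite{cai2020non} was able to prove global non-monotonicity. Once one restricts attention to edge-producing walks and notices that their rate $2du\,f(T)$ is monotone in $T$ exactly on the region $T_1T_2\le 1$, the remainder of the argument is routine bookkeeping with independent Poisson processes.
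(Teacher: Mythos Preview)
Your proof is correct and follows essentially the same approach as the paper: both isolate the positive-length (edge-carrying) trajectories, observe that their Poisson rate is proportional to $f(T)=T/(T+1)^2$ and hence monotone exactly when $T_1T_2\le 1$, and then couple the geometric lifetimes on a common simple random walk. The only cosmetic differences are that the paper indexes by directed edges $x\to y$ (its Lemma~\ref{lemma3}) rather than by vertices, and splits the positive-length Poisson count into three pieces $N^{(1)}+N^{(2)}+N^{(3)}$ where your two-piece split $M_x,R_x$ already suffices.
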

 
Combining Theorem \ref{mono} together with Theorem 3 (iv) and Proposition 2, \cite{cai2020non}, one can now have the existence and uniqueness of a critical fiber length for all sufficiently large $u$. 

\begin{corollary}\label{Tc}
For all $d\ge 3$, there is a $U_d<\infty$ such that for all $u\ge U_d$, 
$$
T_c^+(u,d)=T^-_c(u,d):=T_c(u,d).
$$
So we have $\mathcal{FI}^{u,T}$ percolates a.s. for all $T>T_c$, and does not percolates a.s. for all $T<T_c$.
\end{corollary}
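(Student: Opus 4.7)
The plan is to combine the local stochastic monotonicity just established in Theorem \ref{mono} with the high-intensity upper bound on the critical fiber length from \cite{cai2020non}, and then extract uniqueness of the critical value from a short upper-set argument.

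First, I would invoke Theorem 3(iv) and Proposition 2 of \cite{cai2020non}, which supply an upper bound on $T_c^+(u,d)$ in the high-intensity regime. These imply the existence of some $U_d < \infty$ such that $T_c^+(u,d) \le 1$ for every $u \ge U_d$. Fix any such $u$. Since any $0 < T_1 < T_2 \le 1$ automatically satisfies $T_1 T_2 \le T_2 \le 1$, Theorem \ref{mono} applies throughout the interval $(0,1]$. Because the existence of an infinite open cluster is an increasing event on $\{0,1\}^{\mathbb{L}^d}$, the edge-wise stochastic domination $\mathcal{FI}^{u,T_1}(e)\le\mathcal{FI}^{u,T_2}(e)$ forces the percolation set
\[
P_u\;:=\;\bigl\{T>0:\mathcal{FI}^{u,T}\text{ percolates a.s.}\bigr\}
\]
to be an upper set when restricted to $(0,1]$.

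Next, I would combine this with the inclusion $(T_c^+(u,d),\infty)\subseteq P_u$, which is immediate from the definition of $T_c^+$. Since $T_c^+(u,d)\le 1$, a short case analysis splitting on $T\le 1$ versus $T>1$ upgrades the upper-set property from $(0,1]$ to the whole half-line $(0,\infty)$. Setting $a:=\inf P_u$, the definitions of $T_c^-$ and $T_c^+$ immediately yield $T_c^-(u,d)=a=T_c^+(u,d)=:T_c(u,d)$, which is precisely the statement of the corollary.

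Given the monotonicity theorem, this argument is essentially clean bookkeeping with the definitions; no genuine obstacle should arise beyond correctly importing and checking the high-intensity bound from \cite{cai2020non}. The conceptual point worth stressing in the write-up is that, once the critical window is known to lie within $(0,1]$, the global non-monotonicity established in Theorem 1 of \cite{cai2020non} becomes irrelevant for the question of uniqueness of $T_c$, because Theorem \ref{mono} already delivers monotonicity on the only range of $T$'s that matters.
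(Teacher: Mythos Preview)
Your proposal is correct and follows exactly the approach the paper indicates: the paper states the corollary as an immediate consequence of Theorem \ref{mono} together with Theorem 3(iv) and Proposition 2 of \cite{cai2020non}, without spelling out the upper-set argument, and you have simply made those details explicit. The only thing to verify when writing it up is that the cited results from \cite{cai2020non} indeed yield $T_c^+(u,d)\le 1$ for all sufficiently large $u$ (which follows from $\limsup_{u\to\infty} u\,T_c^+(u,d)<\infty$); once that is granted, your bookkeeping is sound.
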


Our next result provides the exact asymptotic of $T_c$ as $u\to\infty$, which gives an affirmative answer to Part 2, Conjecture 5, \cite{cai2020non}.
\begin{theorem}\label{prop1}
	For all $d\ge 3$, 
	\begin{equation}\label{5.3}
		\lim\limits_{u\to \infty}u*T_c(u,d)=\frac{-\log(1-p_d^{c})}{2},
	\end{equation}
	where $p_d^c$ is the critical value of Bernoulli bond percolation on $\mathbb{L}^d$.
\end{theorem}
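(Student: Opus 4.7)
The plan is to interpret the asymptotic via the limiting behaviour as $T\to 0$ with $uT\to \alpha$: in this regime each fibre has expected length $T\to 0$, so the overwhelming majority of walks either die at the start or take exactly one step, and the edge configuration of $\mathcal{FI}^{u,T}$ restricted to any fixed finite region should converge to an independent Bernoulli bond percolation whose single-edge parameter tends to $1-e^{-2\alpha}$. Matching this with the Bernoulli critical value gives the candidate $\alpha_c := -\tfrac{1}{2}\log(1-p_d^c)$, and one then proves matching upper and lower bounds on $u\cdot T_c(u,d)$.

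\textbf{Upper bound, $\limsup_{u\to\infty} u\cdot T_c(u,d) \le \alpha_c$.} By Poisson thinning, the sub-process of walks of length exactly $1$ is a Poisson point process independent of the rest. Each such walk opens exactly one edge, and the walks from $x$ whose single step goes to a given neighbour $y$ form, across all ordered pairs $(x,y)$, mutually independent Poisson variables of mean $\frac{uT}{(T+1)^3}$. Hence the set of edges opened by length-$1$ walks is exactly a Bernoulli bond percolation on $\mathbb{L}^d$ with parameter
\begin{equation*}
p_1(u,T) \;:=\; 1-\exp\!\Big(-\tfrac{2uT}{(T+1)^3}\Big).
\end{equation*}
For any $\epsilon>0$, taking $uT=\alpha_c+\epsilon$ and letting $T\to 0$ gives $p_1(u,T)>p_d^c$, so this Bernoulli model is supercritical; since it is contained in the edge configuration of $\mathcal{FI}^{u,T}$, the latter percolates as well, giving $T^+_c(u,d)\le (\alpha_c+\epsilon)/u$ for all large $u$. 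Letting $\epsilon\to 0$ concludes this direction.

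\textbf{Lower bound, $\liminf_{u\to\infty} u\cdot T_c(u,d) \ge \alpha_c$.} Fix $\alpha<\alpha_c$. The first observation is that in any fixed finite region $R\subset\mathbb{Z}^d$, as $T\to 0$ with $uT\to\alpha$, the edge configuration of $\mathcal{FI}^{u,T}$ restricted to $R$ converges in distribution to Bernoulli-$(1-e^{-2\alpha})$ bond percolation. Indeed, the length-$1$ contribution is Bernoulli-$p_1(u,T)\to 1-e^{-2\alpha}$, while the length-$\ge 2$ walks have per-vertex intensity $\tfrac{2duT^2}{(T+1)^3}=O(T)$, and the capacity bound $cap^{(T)}(K)\le 2d|K|$ together with a short geometric series argument on walks entering $R$ from outside shows that the expected number of length-$\ge 2$ walks intersecting $R$ is $O(|R|\,T)\to 0$. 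Since $1-e^{-2\alpha}<p_d^c$, the Aizenman--Barsky--Menshikov sharpness of the Bernoulli phase transition provides $L$ and $\eta>0$ small such that the crossing probability of $[-L,L]^d$ under Bernoulli-$(1-e^{-2\alpha})$ is below $\eta$; by the preceding convergence the same crossing probability under $\mathcal{FI}^{u,T}$ is below $2\eta$ for $T$ sufficiently small. A Sznitman-style decoupling inequality for FRI (bounding the dependence between $\mathcal{FI}^{u,T}$-events in well-separated boxes in terms of the capacity of long connecting walks) together with a Grimmett--Marstrand-type renormalisation on mesoscopic boxes of side $L$ then upgrades this finite-size criterion to absence of percolation, yielding $T^-_c(u,d)\ge (\alpha_c-2\epsilon)/u$ for $u$ large, and letting $\epsilon\to 0$ completes the bound.

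\textbf{Main obstacle.} The upper bound is a one-line comparison, whereas the lower bound is the genuinely hard step. The heuristic ``few long walks cannot spoil subcriticality'' is appealing but subtle: each length-$\ge 2$ walk opens a connected chain of edges and thereby induces positive correlations beyond those of Bernoulli percolation, so the mere marginal bound $P(e\ \text{open})\le 1-e^{-2uT/(T+1)}<p_d^c$ is not enough on its own to rule out an infinite cluster. Making the lower bound rigorous uniformly in the scaling window $uT\nearrow \alpha_c$ requires a decoupling inequality tailored to FRI in the spirit of \cite{sznitman2012decoupling}, combined with the sharpness of Bernoulli percolation and a renormalisation scheme; calibrating the decoupling error so that it vanishes precisely at the rate $T\sim \alpha_c/u$ is the principal technical hurdle.
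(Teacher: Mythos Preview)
Your strategy is correct and is essentially the one the paper implements: the upper bound by dominating a supercritical Bernoulli percolation with the first-step edges is exactly the argument cited from \cite{cai2020non,prevost2020percolation}, and for the lower bound the paper likewise compares the seed-level crossing to subcritical Bernoulli (via $\hat{\mathcal{FI}}^{u,T}$ and exponential decay, Theorem~6.1 of \cite{grimmett2013percolation}), bounds the $O(uT^2)=O(T)$ contribution from walks taking at least two steps inside the seed box, and then runs the Rodriguez--Sznitman multiscale decoupling. Two small remarks: the paper chooses the seed scale $L_0=\lfloor u^{1/(2d)}\rfloor$ \emph{growing} with $u$ (and $l_0=10$ fixed), which makes both the long-walk error and the Bernoulli crossing probability vanish automatically as $u\to\infty$, whereas your fixed-$L$ version also works but requires first choosing $L$ large depending on $\alpha<\alpha_c$; and ``Grimmett--Marstrand'' is not the right label here---that result concerns supercritical slab percolation---what you need (and what the paper uses) is the subcritical multiscale block scheme of \cite{rodriguez2013phase,sznitman2012decoupling}.
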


\begin{remark}
In Theorem 4.2 \cite{cai2020non} and on Page 263 \cite{prevost2020percolation}, it has been shown that  
\begin{equation}
	\limsup\limits_{u\to \infty}u*T_c^+\le \frac{-\log(1-p_d^c)}{2}.
\end{equation}
So here we only need to prove a sharp lower bound for the subcritical phase (a partial result on the asymptotic order was given in Proposition 2, \cite{cai2020non}). 		
\end{remark}

Next we estimate the subcritical phase in the low intensity regime. 

\begin{theorem}\label{prop2}
	For any $d\ge 3$ and $\delta>0$, there exist constants $0<U_0(d,\delta)<1$ and $C_1(d,\delta)>0$ such that: for any $0<u\le U_0$, 
	\begin{equation}
	T_c^-\ge C_1u^{-\frac{1}{d-1}+\delta}	.
	\end{equation}
\end{theorem}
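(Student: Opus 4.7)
The plan is to run a multi-scale renormalization in the spirit of Sznitman's decoupling methodology \cite{sznitman2012decoupling}, exploiting the Poisson structure of FRI recorded in Definitions \ref{def1} and \ref{def2}. Fix a large integer $\ell$ and a base length $L_0$ (both to be tuned in terms of $u,T,d,\delta$), set $L_n := L_0 \ell^n$ for $n \ge 0$, and consider the crossing events
\[
A_n := \left\{ B(0, L_n) \xleftrightarrow[]{\mathcal{FI}^{u,T}} \partial B(0, 2L_n) \right\}, \qquad q_n := P^{u,T}(A_n).
\]
The goal is to show $q_n \to 0$, from which translation invariance and a standard union bound force $\mathcal{FI}^{u,T}$ to have no infinite cluster, i.e., $T \le T_c^-(u,d)$.

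\textbf{Renormalization and decoupling.} A crossing at scale $L_{n+1}$ forces two sub-crossings at scale $L_n$ in sub-boxes whose centers are at $\ell^\infty$ distance at least $L_{n+1}/3$. To decouple the two sub-events I would split the fibers of $\mathcal{FI}^{u,T}$ into ``short'' ones (length $\le L_n/8$) and ``long'' ones (length $> L_n/8$): short fibers based near one sub-box cannot reach the other, so the corresponding crossing events are independent by the spatial independence of the underlying Poisson point process (see the ``Independence in FRI'' paragraph of Section \ref{sec2}). This yields
\[
q_{n+1} \;\le\; c_d\, \ell^{2(d-1)}\, q_n^2 \;+\; \epsilon_n,
\]
where $\epsilon_n$ is the probability that some FRI fiber of length $\ge L_n/8$ starts in $B(0, 4L_{n+1})$. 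Combining the capacity bound $\mathrm{cap}^{(T)}(K)\le 2d|K|$ from Section \ref{sec2} with the geometric killing law at rate $(T+1)^{-1}$ gives $\epsilon_n \le C_d\, u\, L_{n+1}^d \exp(-L_n/(9T))$, which decays doubly exponentially in $n$ as soon as $L_0 \ge T$.

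\textbf{Base case and balancing.} The heart of the proof is a sharp upper bound on $q_0$. Because open edges in FRI are positively correlated through shared fibers, the naive independent-edge Peierls bound does not apply; instead I would expand over pairs $(\gamma, \mathcal{F})$, where $\gamma$ is a self-avoiding crossing path of the annulus $B(0,2L_0)\setminus B(0,L_0)$ and $\mathcal{F}=(\eta_1,\dots,\eta_k)$ is a chain of distinct FRI fibers whose union contains $\gamma$. Using $v^{(T)} = \sum_x \tfrac{2d}{T+1} P_x^{(T)}$ to control the rate of fibers consecutively traversing a prescribed sub-path of length $m$ by $2d\bigl(T/(2d(T+1))\bigr)^m$, and then summing over all partitions of $\gamma$ and invoking the capacity bound to absorb the geometric sums over starting positions, one arrives at an estimate of the form
\[
q_0 \;\le\; C_d\, L_0^{d}\bigl(C_d\, u\, T^{d-1}\bigr)^{L_0/T}.
\]
Taking $L_0 := \lceil T\log(1/u)\rceil$ and using the hypothesized bound $T \le C_1 u^{-1/(d-1)+\delta}$ forces $u\,T^{d-1} \le u^{(d-1)\delta}$, which makes $q_0$ polynomially small in $u$. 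The iteration $q_{n+1} \le c_d\,\ell^{2(d-1)} q_n^2 + \epsilon_n$ then drives $q_n \to 0$ doubly exponentially and yields $T_c^-(u,d) \ge C_1 u^{-1/(d-1)+\delta}$, after a possible redefinition of $\delta$ to absorb polynomial prefactors.

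\textbf{Main obstacle.} The most delicate step is the base case: extracting the sharp factor $T^{d-1}$ (rather than the naive $T^d$) from the Peierls-type expansion requires a careful chain decomposition of the covering fiber configurations together with the capacity bound $\mathrm{cap}^{(T)}(K) \le 2d|K|$, and a precise accounting of the starting positions from which a killed simple random walk can trace a prescribed sub-path of $\gamma$. A looser estimate would only produce $q_0 \le (C_d u T^d)^{L_0/T}$ and hence the weaker bound $T \lesssim u^{-1/d}$, so the exponent $1/(d-1)$ in the theorem is ultimately tied to a sharp surface-versus-volume analysis of the range of the killed simple random walk on $\mathbb{Z}^d$.
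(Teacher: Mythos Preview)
Your renormalization scaffolding is fine and matches the paper's in spirit, but the way you distribute the parameters is essentially the opposite of what the paper does, and your hard step is one the paper avoids entirely.

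\medskip
\textbf{How the paper argues.} The paper takes the base scale to be a \emph{fixed constant} $L_0=10$ and pushes all of the $T$--dependence into the geometric ratio $l_0=\lfloor (c'T(\log(T+1))^{3+\epsilon})^{1/2}\rfloor$. With $L_0$ bounded, the base event is controlled in one line:
\[
P^{u,T}\bigl(B_{0,0}\xleftrightarrow[]{\mathcal{FI}^{u,T}}\partial \widetilde B_{0,0}\bigr)
\le P^{u,T}\bigl(\text{some fiber hits }B_{0,0}\bigr)
\le 1-e^{-u\,\mathrm{cap}^{(T)}(B_{0,0})}\le c''\,u,
\]
with no Peierls expansion, no chain decomposition, and no $T$ in sight. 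The exponent $d-1$ then appears purely combinatorially, from the tree--counting bound $|\Lambda_{n,x}|\le (c_0 l_0^{2(d-1)})^{2^n}$: since $l_0^2\sim T$, the entropy factor contributes $l_0^{2(d-1)}\sim T^{d-1}$, and the recursion closes as soon as $l_0^{2(d-1)}\cdot c''u\asymp u\,T^{d-1}$ is small, i.e.\ exactly when $T\lesssim u^{-1/(d-1)}$ up to logarithms. The decoupling error is handled not by a short/long split but by the event $F_{n,x}$ that no fiber \emph{started outside} $\hat B_{n,x}$ reaches $\widetilde B_{n,x}$, bounded via the diameter estimate $P_0^{(T)}(\max_i|X_i^{(T)}|\ge L)\le c_1e^{-c_2 T^{-1/3}L^{2/3}}$.

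\medskip
\textbf{Where your proposal diverges and where the gap is.} You instead take $\ell$ fixed and $L_0\sim T\log(1/u)$, which forces the entire burden onto the base estimate $q_0\le C_d L_0^d (C_d u T^{d-1})^{L_0/T}$. This bound is asserted, not proved, and the mechanism you sketch does not obviously deliver it: a covering family of fibers for a self--avoiding $\gamma$ need not traverse contiguous sub-paths of $\gamma$ consecutively (a single fiber may contribute scattered edges of $\gamma$, interleaved with edges covered by other fibers), so summing the ``consecutive traversal'' rates $2d\bigl(T/(2d(T+1))\bigr)^m$ over ordered partitions of $\gamma$ is not an upper bound on the covering probability. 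You flag this yourself as the main obstacle, and it is a genuine one; extracting $T^{d-1}$ rather than $T^d$ from such an expansion would require a nontrivial argument about the range geometry of the killed walk that is nowhere in the paper. A secondary issue: your $\epsilon_n$ only accounts for long fibers \emph{starting inside} $B(0,4L_{n+1})$, but long fibers starting arbitrarily far away can still couple the two sub-boxes; patching this requires exactly the diameter estimate the paper uses.

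\medskip
In short: the paper gets the exponent $1/(d-1)$ for free from the entropy term $l_0^{2(d-1)}$ after a trivial base case, whereas your route tries to manufacture the same exponent inside a delicate Peierls bound that you have not established.
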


Combining Theorem \ref{prop2} and the supercritical estimates obtained in (v) of Theorem 3, \cite{cai2020non},  the following result can be summarized under log-log plot on the bounds we have about the phase transition.
\begin{corollary}
	When $d=3$, \begin{equation}
		0.5\le \liminf\limits_{u\to 0}\frac{\log(T_c^-)}{-\log(u)}\le	\limsup\limits_{u\to 0}\frac{\log(T_c^+)}{-\log(u)}\le 2;
	\end{equation}
	When $d\ge 4$, \begin{equation}
		\frac{1}{d-1}\le \liminf\limits_{u\to 0}\frac{\log(T_c^-)}{-\log(u)}\le	\limsup\limits_{u\to 0}\frac{\log(T_c^+)}{-\log(u)}\le 1.
	\end{equation}
\end{corollary}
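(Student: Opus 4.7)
The corollary asserts two chains of three inequalities each, so the plan is to dispose of them one by one. The middle inequality in each chain, $\liminf_{u\to 0}\log(T_c^-)/(-\log u)\le\limsup_{u\to 0}\log(T_c^+)/(-\log u)$, is an immediate consequence of $T_c^-(u,d)\le T_c^+(u,d)$ together with the monotonicity of $\log$ and the fact that $-\log u>0$ for $u<1$. The leftmost inequalities come directly from Theorem \ref{prop2}, while the rightmost inequalities come from the supercritical low-intensity estimates recorded in Theorem 3 (v) of \cite{cai2020non}; the role of the corollary is merely to translate these polynomial bounds into their log-log form.

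For the leftmost inequalities, fix $\delta>0$ and apply Theorem \ref{prop2}: there exist $U_0(d,\delta)\in(0,1)$ and $C_1(d,\delta)>0$ such that $T_c^-\ge C_1 u^{-1/(d-1)+\delta}$ for all $0<u\le U_0$. Taking logarithms and dividing by $-\log u>0$ gives
$$
\frac{\log T_c^-}{-\log u}\ \ge\ \frac{\log C_1}{-\log u}+\frac{1}{d-1}-\delta .
$$
Letting $u\to 0^+$, the first term on the right vanishes, so $\liminf_{u\to 0}\log(T_c^-)/(-\log u)\ge 1/(d-1)-\delta$. Since $\delta>0$ was arbitrary, the liminf is at least $1/(d-1)$; specializing to $d=3$ reproduces the stated bound $0.5$.

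For the rightmost inequalities, I would invoke Theorem 3 (v) of \cite{cai2020non}, which provides an upper bound on $T_c^+$ of the form $T_c^+(u,d)\le C_d\, u^{-\alpha_d}$ (up to subpolynomial factors) for all $u$ sufficiently small, with $\alpha_d=2$ when $d=3$ and $\alpha_d=1$ when $d\ge 4$. Taking logarithms in exactly the same way yields
$$
\frac{\log T_c^+}{-\log u}\ \le\ \frac{\log C_d}{-\log u}+\alpha_d,
$$
and sending $u\to 0^+$ gives $\limsup_{u\to 0}\log(T_c^+)/(-\log u)\le \alpha_d$, which is $2$ for $d=3$ and $1$ for $d\ge 4$.

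There is no real obstacle here: the content is entirely contained in Theorem \ref{prop2} and in the cited supercritical estimate of \cite{cai2020non}, and the corollary is only their restatement after a routine logarithmic normalization. The one point worth verifying carefully is that the subpolynomial/constant prefactors in both bounds indeed wash out in the $(-\log u)^{-1}$ normalization, which is the reason the bounds are stated in terms of $\liminf$ and $\limsup$ of the log-log ratio rather than as a pointwise polynomial asymptotic.
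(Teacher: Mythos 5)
Your proof is correct and is essentially the argument the paper has in mind (the paper does not write out a proof, only points to Theorem \ref{prop2} and Theorem 3(v) of \cite{cai2020non}): take logarithms, normalize by $-\log u>0$, note that the constant prefactors and the $\delta$-slack vanish in the limit, and use $T_c^-\le T_c^+$ for the middle inequality. The calculation $(-1/(d-1)+\delta)\log u / (-\log u) = 1/(d-1)-\delta$ and the subsequent $\delta\to 0$ step are exactly as intended.
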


Finally, Theorem \ref{prop1} together with Theorem \ref{prop2} also give an affirmative answer to the conjecture posed on Page 263, \cite{prevost2020percolation} on the global existence of a non-trivial phase transition with respect to $u$:
\begin{theorem}
For all $d\ge 3$ and $T\in (0,\infty)$, there is a $u_c=u_c(d,T)\in (0,\infty)$ such that $\mathcal{FI}^{u,T}$ percolates a.s. for all $u\in (u_c,\infty)$, and does not percolate a.s. for all $u\in (0,u_c)$.
\end{theorem}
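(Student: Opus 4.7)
The plan is to combine monotonicity in $u$ with the asymptotic estimates on $T_c$ already collected in the excerpt. The only genuinely new ingredient needed is a clean monotone-coupling argument in $u$; the rest is a bookkeeping step that converts the bounds on $T_c^\pm(u,d)$ into bounds on $u_c(d,T)$.

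First I would establish monotonicity in $u$: using Definition \ref{def2}, if $u_1<u_2$, couple the Poisson variables so that $\{N_x^{(u_1)}\}_{x\in\mathbb{Z}^d}\le \{N_x^{(u_2)}\}_{x\in\mathbb{Z}^d}$ pointwise (thinning the Poisson$(\tfrac{2du_2}{T+1})$'s), and attach the same geometrically killed walks starting from each $x$. Then every trajectory of $\mathcal{FI}^{u_1,T}$ is a trajectory of $\mathcal{FI}^{u_2,T}$, so $\mathcal{FI}^{u_1,T}(e)\le \mathcal{FI}^{u_2,T}(e)$ for every $e\in\mathbb{L}^d$. Hence the event $\{\mathcal{FI}^{u,T}\text{ percolates}\}$ is increasing in $u$, and the zero-one laws already invoked in the excerpt allow one to define
\begin{equation*}
u_c(d,T):=\inf\bigl\{u>0:\mathcal{FI}^{u,T}\text{ percolates a.s.}\bigr\}\in[0,\infty],
\end{equation*}
with a.s. non-percolation for all $u<u_c$ and a.s. percolation for all $u>u_c$. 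It then remains to show $0<u_c<\infty$.

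For $u_c>0$ I would invoke Theorem \ref{prop2} with any fixed $\delta\in(0,\tfrac{1}{d-1})$: the exponent $-\tfrac{1}{d-1}+\delta$ is strictly negative, so $C_1 u^{-1/(d-1)+\delta}\to\infty$ as $u\downarrow 0$. Consequently there exists $u_1=u_1(d,T)>0$ small enough that $T_c^-(u_1,d)>T$, which by the definition of $T_c^-$ forces $\mathcal{FI}^{u_1,T}$ not to percolate; monotonicity in $u$ then yields $u_c\ge u_1>0$. For $u_c<\infty$ I would apply Theorem \ref{prop1}: since $u\cdot T_c(u,d)\to \tfrac{-\log(1-p_d^c)}{2}$ as $u\to\infty$, one has $T_c(u,d)\to 0$. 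Choose $u_2$ so large that simultaneously $u_2\ge U_d$ (so Corollary \ref{Tc} applies and $T_c^+(u_2,d)=T_c(u_2,d)$) and $T_c^+(u_2,d)<T$; then $\mathcal{FI}^{u_2,T}$ percolates, hence $u_c\le u_2<\infty$.

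There is no real obstacle here beyond the $u$-monotone coupling, which is immediate from Definition \ref{def2}; the substance of the theorem is entirely contained in Theorems \ref{prop1} and \ref{prop2}, and the proof amounts to reading those statements along vertical rather than horizontal slices of the $(u,T)$ phase diagram.
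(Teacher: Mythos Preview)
Your proposal is correct and matches the paper's approach: the paper gives no explicit proof of this theorem, merely stating that it follows from Theorems~\ref{prop1} and~\ref{prop2} (with monotonicity in $u$ already noted in the introduction as immediate from the definition). Your write-up is precisely the natural way to fill in those details.
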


\section{Local stochastic monotoncity}\label{sec5}

Before giving the proof of Theorem \ref{mono}, we introduce a new approach to construct $\mathcal{FI}^{u,T}$. Denote the collection of all directed nearest-neighbor edges by $\hat{\mathbb{L}}^d:=\left\lbrace x\to y:\{x,y\}\in \mathbb{L}^d \right\rbrace $. Let $\{N_{x\to y}\}_{x\to y\in \hat{\mathbb{L}}^d}\overset{i.i.d.}{\sim} Pois\left( \frac{uT}{(T+1)^2}\right) $, $\{Y_k\}_{k=1}^\infty\overset{i.i.d.}{\sim} Geo\left( \frac{T}{T+1}\right) $ and $\left\lbrace X_{\cdot}^{y,k} \right\rbrace_{k\in \mathbb{N}^+, y\in \mathbb{Z}^d}$ be a sequence of independent random walks with law $P_0^{(T)}$. Then we have

\begin{lemma}\label{lemma3}
	$$\bigcup\limits_{x\to y\in \hat{\mathbb{L}}^d}\bigcup\limits_{1\le k\le N_{x\to y},Y_k\ge 1}\left\lbrace \{x,y\}, \{X_i^{y,k},X_{i+1}^{y,k}\}, 0\le i \le Y_k-1\right\rbrace \overset{d}{=}\mathcal{FI}^{u,T}. $$
\end{lemma}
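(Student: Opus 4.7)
The plan is to verify the equality in distribution by reading the right-hand side as the image of $\mathcal{FI}^{u,T}$ under a Poisson thinning along the first step of each walk, combined with a Markov-style factorisation of each walk into (first step)+(continuation from its second vertex). Starting from Definition~\ref{def2}, from every $x\in\mathbb{Z}^d$ one emits $N_x\sim Pois(2du/(T+1))$ independent geometrically killed SRWs, and each such walk either dies in place (with probability $1/(T+1)$, contributing no edge) or takes a first step uniformly to one of the $2d$ neighbours $y$.

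The first step of the proof is to classify every walk of $\mathcal{FI}^{u,T}$ by its first directed edge $x\to y$ and apply Poisson thinning: the walks whose first step is $x\to y$ form an independent Poisson point process of intensity $(2du/(T+1))\cdot(T/(2d(T+1)))=uT/(T+1)^2$, matching the intensity of $N_{x\to y}$ in the lemma. Because thinning preserves independence across disjoint classification classes and walks from distinct starting vertices are already independent, the family $\{N_{x\to y}\}_{x\to y\in\hat{\mathbb{L}}^d}$ is mutually independent. Walks killed at step $0$ contribute no edges and can be dropped from the picture.

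The second step is to invoke the strong Markov property of the killed walk at time $1$: conditional on the first step being $y$, the remaining trajectory is an independent geometrically killed SRW with law $P_y^{(T)}$, equivalently an independent $P_0^{(T)}$-walk translated to begin at $y$. Decomposing this continuation into its killing time (the $Geo(T/(T+1))$ variable $Y_k$) and its nearest-neighbour skeleton (the trajectory $X_\cdot^{y,k}$), which are independent because the geometric killing and the spatial stepping are independent at every time, one reads off that the edges traced by this walk are $\{x,y\}$ together with $\{X_i^{y,k},X_{i+1}^{y,k}\}$ for $0\le i\le Y_k-1$, exactly the set described in the lemma. Taking the union over $(x\to y,k)$ reproduces the right-hand side, and the joint independence of $\{N_{x\to y}\}$, $\{Y_k\}$ and $\{X_\cdot^{y,k}\}$ is inherited from simultaneous Poisson thinning across all starting points and first-step directions. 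The main point of care is therefore purely the bookkeeping of these independence relations; no deeper probabilistic ingredient is required.
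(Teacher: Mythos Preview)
Your argument is correct and matches the paper's own proof: both rely on Poisson thinning of the walks by their first directed step (yielding the intensity $uT/(T+1)^2$) together with the memoryless/Markov property to identify the continuation after the first step as a fresh geometrically killed walk from $y$. Your write-up is simply a more explicit version of the same two ideas.
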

\begin{proof}
	For any $x\in \mathbb{Z}^d$ and $x\to y\in \hat{\mathbb{L}}^d$, by Definition \ref{def2} and property of Poisson point process, we know the number of paths starting from $x$ with length $\ge 1$ and the first step $x\to y$ is a Poisson variable with parameter $\frac{T}{T+1}*\frac{1}{2d}*\frac{2du}{T+1}=\frac{uT}{(T+1)^2}$. In addition, by the memoryless property of the geometric distribution, the remaining part of each path after the first step removed is still a geometrically killed random walk with killing rate $\frac{1}{T+1}$. Then Lemma \ref{lemma3} follows.
\end{proof}

Now we give the proof of Theorem \ref{mono} as follows:

\begin{proof}[Proof of Theorem \ref{mono}:]
	When $T_1<T_2$, $0<T_1*T_2<1$, we define three independent sequences of random variables: $$\left\lbrace N_{x\to y}^{(1)} \right\rbrace\overset{i.i.d.}{\sim}Pois\left( \frac{uT_1}{(T_1+1)(T_2+1)}\right) , $$ $$\left\lbrace N_{x\to y}^{(2)} \right\rbrace\overset{i.i.d.}{\sim}Pois\left( \frac{uT_1(T_2-T_1)}{(T_1+1)^2(T_2+1)}\right) $$ and $$\left\lbrace N_{x\to y}^{(3)} \right\rbrace\overset{i.i.d.}{\sim}Pois\left( \frac{u(T_2-T_1)(1-T_1T_2)}{(T_1+1)^2(T_2+1)^2}\right) . $$
	
	Meanwhile, it's elementary to construct $$\left\lbrace \left( Y_k^{(T_1)},Y_k^{(T_2)}\right)  \right\rbrace_{k=1}^{\infty}\overset{i.i.d.}{\sim} \left( Geo\left( \frac{T_1}{T_1+1}\right) ,Geo\left( \frac{T_2}{T_2+1}\right) \right)$$ such that for any $k\ge 1$, $P\left(Y_k^{(T_1)}\le Y_k^{(T_2)} \right)=1$.
	
	Note that for any $x\to y$, $N_{x\to y}^{(1)}+N_{x\to y}^{(2)}\sim Pois\left( \frac{uT_1}{(T_1+1)^2}\right) $ and $N_{x\to y}^{(1)}+N_{x\to y}^{(2)}+N_{x\to y}^{(3)}\sim Pois\left( \frac{uT_2}{(T_2+1)^2}\right)$. By Lemma \ref{lemma3}, we have  \begin{equation}\label{5.1}
		\bigcup\limits_{x\to y\in \hat{\mathbb{L}}^d}\bigcup\limits_{1\le k\le N^{(1)}_{x\to y}+N^{(2)}_{x\to y},Y^{(T_1)}_k\ge 1}\left\lbrace \{x,y\}, \{X_i^{y,k},X_{i+1}^{y,k}\}, 0\le i \le Y^{(T_1)}_k-1\right\rbrace \overset{d}{=}\mathcal{FI}^{u,T_1}.
	\end{equation}
	
	 Note that for any $k\ge 1$, if $Y^{(T_1)}_k\ge 1$, then $Y^{(T_2)}_k\ge 1$. Use Lemma \ref{lemma3} again, \begin{equation}\label{4.2}
	 	\bigcup\limits_{x\to y\in \hat{\mathbb{L}}^d}\bigcup\limits_{1\le k\le N^{(1)}_{x\to y}+N^{(2)}_{x\to y}+N^{(3)}_{x\to y},Y^{(T_2)}_k\ge 1}\left\lbrace \{x,y\}, \{X_i^{y,k},X_{i+1}^{y,k}\}, 0\le i \le Y^{(T_2)}_k-1\right\rbrace \overset{d}{=}\mathcal{FI}^{u,T_2}.
	 \end{equation}
 
 Finally, by comparing the LHS's of (\ref{5.1}) and (\ref{4.2}), we get the stochasitc domination in Theorem \ref{mono}.
\end{proof}

\section{Asymptotic of critical values}\label{sec4}
In this section, we prove Theorem \ref{prop1} and Theorem \ref{prop2}. Before presenting the proof, we need to first introduce some notations according to \cite{rodriguez2013phase} in order to implement the renormalization arguement.

\begin{enumerate}
	\item Let $L_0$ and $l_0$ be positive integers. For $n\ge 1$, let $L_n=l_0^n*L_0$ and $\mathbb{L}_n=L_n*\mathbb{Z}^d$.
	
	\item Set $B_{n,x}:=x+\left(\left[0,L_n\right)\cap \mathbb{Z}\right)^d$ and $\widetilde{B}_{n,x}=\bigcup\limits_{y\in \mathbb{L}_n: d(B_{n,y},B_{n,x})\le 1 }B_{n,y}$.
	
	\item Let $\mathcal{I}_n=\{n\}\times \mathbb{L}_n$. For any $(n,x)\in \mathcal{I}_n$, write 
	$$\mathcal{H}_1(n,x)=\left\lbrace (n-1,y)\in \mathcal{I}_{n-1}:B_{n-1,y}\subset B_{n,x},B_{n-1,y}\cap \partial B_{n,x} \neq \emptyset \right\rbrace  ,$$
	$$\mathcal{H}_2(n,x)=\left\lbrace (n-1,y)\in \mathcal{I}_{n-1}:B_{n-1,y}\cap \{z\in \mathbb{Z}^d:d(z,B_{n,x})=\lfloor \frac{L_n}{2} \rfloor \} \neq \emptyset \right\rbrace.  $$
	
	\item For $x\in \mathbb{L}^d$, $n\ge 0$, let \begin{equation}\label{Lambda}
		\begin{split}
			\Lambda_{n,x}=\{\mathcal{T}\subset \bigcup\limits_{k=0}^{n}\mathcal{I}_k:&\mathcal{T}\cap\mathcal{I}_n=(n,x)\ and\ \forall (k,y)\in \mathcal{T}\cap \mathcal{I}_k, 0<k\le n, has\ two\\
			& descendants\ (k-1,y_i(k,y))\in \mathcal{H}_i(k,y),i=1,2\ such\ that\\
			& \mathcal{T}\cap \mathcal{I}_{k-1}=\bigcup\limits_{(k,y)\in \mathcal{T}\cap \mathcal{I}_{k-1}}\{(k-1,y_1(k,y)),(k-1,y_2(k,y))\}      \}.    
		\end{split}
	\end{equation}
\end{enumerate}

By (2.8) of \cite{rodriguez2013phase}, one has \begin{equation}\label{5.2}
	|\Lambda_{n,x}|\le (c_0(d)*l_0^{2(d-1)})^{2^n}.
\end{equation}	

Based on these settings, we can do decompositions on the events and then estimate their probabilities by choosing proper $L_0$ and $l_0$. Roughly speaking, we need to select $L_0$ to control the $0-$level event and select $l_0$ to guarantee the ``almost indepedence'' between trajectories in different boxes, according to $u$ and $T$.

Now we give the proof of Theorem \ref{prop1}.

\subsection{Proof of Theorem \ref{prop1}} 

Recall that in Theorem 3.(iii) of \cite{cai2020non} and on Page 263 of \cite{prevost2020percolation}, it has been shown that  
\begin{equation}
	\limsup\limits_{u\to \infty}u*T_c^+\le \frac{-\log(1-p_d^c)}{2}.
\end{equation}		
Now it's sufficient to prove that: for any $\epsilon>0$, there exists $U'(d,\epsilon)>0$ such that for any $u>U'$ and $T>0$ satisfying 
\begin{equation}\label{5.4}
	u*T\le  \frac{-\log(1-p_d^c)}{2}-\epsilon,
\end{equation}
$\mathcal{FI}^{u,T}$ does not percolate, from which we deduce
\begin{equation}
		\liminf\limits_{u\to \infty}u*T_c^-\ge \frac{-\log(1-p_d^c)}{2}.
	\end{equation}

   Assume $u$ is sufficiently large while $T$ satisfies (\ref{5.4}). Take $L_0=\lfloor u^{\frac{1}{2d}}\rfloor>10$ and $l_0=10$. Let $B^*_{0,0}=\{y\in \mathbb{Z}^d:d(\{y\},\widetilde{B}_{0,0})\le 1\}$ and recall the notation $\hat{\eta}^K$ in Section \ref{sec2}, then we have
	\begin{equation}\label{57}
		\begin{split}
			&P^{u,T}\left(\exists \eta \in \mathcal{FI}^{u,T}\ such\ that\ \{\eta(i),\eta(i+1)\}\subset \widetilde{B}_{0,0}\ for\ some\ i\ge 1\right) \\
			\le &P^{u,T}\left(\exists \eta\in \mathcal{FI}^{u,T}\ such\ that\ the\ length\ of\ \hat{\eta}^{B^*_{0,0}}\ \ge 2\right)\\
			=&1-\exp(-u*cap^{(T)}(B^*_{0,0})*\left(\frac{T}{T+1} \right)^2 )\\
			\le &u*cap^{(T)}(B^*_{0,0})*\left(\frac{T}{T+1} \right)^2\\
			\le &c*u^{1.5}*T^2\le c'*u^{-0.5}.
		\end{split}
	\end{equation}

	Let $\hat{\mathcal{FI}}^{u,T}:=\left\lbrace e\in \mathbb{L}^d:e=\{\eta(0),\eta(1)\},\eta\in \mathcal{FI}^{u,T} \right\rbrace $. By Definition \ref{def2}, it's easy to see that $\hat{\mathcal{FI}}^{u,T}$ has the same distribution as the collection of open edges of a Bernoulli bond percolation with parmeter $1-\exp(-\frac{uT}{(T+1)^2})<p_d^c$. 
	
	By Theorem 6.1 of \cite{grimmett2013percolation}, we have \begin{equation}\label{47}
		P^{u,T}\left(B_{0,0}\xleftrightarrow[]{\hat{\mathcal{FI}}^{u,T}} \partial \widetilde{B}_{0,0} \right)\le (2L_0+1)^d*P^{u,T}\left(0\xleftrightarrow[]{\hat{\mathcal{FI}}^{u,T}} \partial B_{0,0}  \right)\le (2L_0+1)^d*e^{-cL_0}.
	\end{equation}
	
	
	Note that when the event in the LHS of (\ref{57}) does not occur, $\left\lbrace B_{0,0}\xleftrightarrow[]{\mathcal{FI}^{u,T}} \partial \widetilde{B}_{0,0}\right\rbrace $ is equivalent
		 to $\left\lbrace B_{0,0}\xleftrightarrow[]{\hat{\mathcal{FI}}^{u,T}} \partial \widetilde{B}_{0,0}\right\rbrace $. By (\ref{57}) and $(\ref{47})$, \begin{equation}\label{4.8}
    	P^{u,T}\left(B_{0,0}\xleftrightarrow[]{\mathcal{FI}^{u,T}} \partial \widetilde{B}_{0,0} \right)\le c'*u^{-0.5}+(2L_0+1)^d*e^{-cL_0}.
    \end{equation}
	
	For any $n\ge 0$ and $x\in \mathbb{L}_n$, we write that \begin{equation}
		A_{n,x}:=\left\lbrace B_{n,x}\xleftrightarrow[]{\mathcal{FI}^{u,T}} \partial \widetilde{B}_{n,x}  \right\rbrace.
	\end{equation}
	
	Recall the notation $\Lambda_{n,x}$ in (\ref{Lambda}). For any $\mathcal{T}\in \Lambda_{n,x}$, like (2.13) of \cite{rodriguez2013phase}, we write \begin{equation}
		A_{\mathcal{T}}:=\bigcap\limits_{(0,y)\in \mathcal{T}\cap \mathcal{I}_0}A_{0,y}.
	\end{equation}
	Similar to (2.14) of \cite{rodriguez2013phase}, we have \begin{equation}\label{5.11}
		A_{n,x}\subset \bigcup\limits_{\mathcal{T}\in \Lambda_{n,x}}A_{\mathcal{T}}.
	\end{equation}
	
Here we need a decoupling inequality, which is parallel to Lemma 5.4, \cite{cai2020chemical}. Let $\hat{B}_{n,x}=\{y:d(\{y\},\widetilde{B}_{n,x})\le L_{n+1}\}$ and $F_{n,x}:=\bigcap\limits_{\eta \in \mathcal{FI}^{u,T}, \eta(0)\in \left(\hat{B}_{n,x}\right)^c } \{\eta \cap \widetilde{B}_{n,x}=\emptyset\}.$

If $F_{n,x}$ does not occur, there must exist a path $\eta\in\mathcal{FI}^{u,T}$ such that $\eta \cap \partial \hat{B}_{n,x}\neq \emptyset$ and $\eta \cap \partial \widetilde{B}_{n,x} \neq \emptyset$. Recalling the construction of FRI on a finite set in Section \ref{sec2} (here we take $K=\partial\hat{B}_{n,x}\cup \partial \widetilde{B}_{n,x} $), the number of such paths is a Poisson random variable with parameter \begin{equation}\label{5.13}
	\begin{split}
		&u*\left[ \sum_{y\in \partial\hat{B}_{n,x} }Es_{K}^{(T)}(y)*P_y^{(T)}\left(H_{\widetilde{B}_{n,x}}<\infty \right)+\sum_{y\in  \partial \widetilde{B}_{n,x} }Es^{(T)}_{K}(y)*P_y^{(T)}\left(H_{\partial \hat{B}_{n,x}}<\infty\right)  \right]\\
		\le &c*u*(L_{n+1})^{d}*\left(1-\frac{1}{T+1}\right)^{L_{n+1}}\le c_1u^{1.5}10^{nd}e^{-c_2L_{n+1}}.
	\end{split}
\end{equation}

By $l_0=10$ and (\ref{5.13}), when $u$ is sufficiently large, we have: for any $n\ge 0$,
\begin{equation}\label{5.12}
	P^{u,T}\left(F_{n,x}\right)\ge \exp(-c_1u^{1.5}10^{nd}e^{-c_2L_{n+1}})\ge 1-e^{-c_3*L_0*2^n}.
\end{equation}	
		
Assume that $\mathcal{T}\cap \mathcal{I}_{n-1}=\{(n-1,y_1), (n-1,y_2)\}$, where $y_1,y_2\in \mathbb{L}_{n-1}$. Since $L_0>10$, $\hat{B}_{n-1,y_1} \cap \hat{B}_{n-1,y_2}=\emptyset$. We denote that $\mathcal{T}_i=\{(m,y)\in \mathcal{T}:y\in \widetilde{B}_{n-1,y_i}\}$, $i\in \{1,2\}$. We also write \begin{equation}
		A_{\mathcal{T}_i}=\bigcap\limits_{(0,y)\in \mathcal{T}_i\cap \mathcal{I}_0}A_{0,y}\ and\ \hat{A}_{\mathcal{T}_i}=\bigcap\limits_{(0,y)\in \mathcal{T}_i\cap \mathcal{I}_0}\left\lbrace B_{0,y}\xleftrightarrow[]{\{\eta\in \mathcal{FI}^{u,T}: \eta(0)\in \hat{B}_{n-1,y_i}\}} \partial \widetilde{B}_{0,y}  \right\rbrace. 
	\end{equation}
	Note that $\hat{A}_{\mathcal{T}_1}$ and $\hat{A}_{\mathcal{T}_2}$ are independent and for $i\in \{1,2\}$, $\hat{A}_{\mathcal{T}_i}\subset A_{\mathcal{T}_i}$. In addition, if $F_{n-1,y_1}$ and $F_{n-1,y_2}$ both occur, events $\hat{A}_{\mathcal{T}}$ and $\hat{A}_{\mathcal{T}_1}\cap \hat{A}_{\mathcal{T}_2}$ will be equivalent.

	 By induction and (\ref{5.12}), similar to (5.26) of \cite{cai2020chemical}, we have 
	 \begin{equation}\label{5.15}
		\begin{split}
			P^{u,T}\left(A_{\mathcal{T}}\right)+2e^{-c_3*L_0*2^n}\le & P^{u,T}\left(A_{\mathcal{T}}\cap F_{n-1,y_1}\cap F_{n-1,y_2} \right)+4e^{-c_3*L_0*2^n}\\
			\le &P^{u,T}\left(\hat{A}_{\mathcal{T}_1}\cap \hat{A}_{\mathcal{T}_2} \right)+4e^{-c_3*L_0*2^n}\\
			\le &\left(P^{u,T}\left(A_{\mathcal{T}_1}\right)+2e^{-c_3*L_0*2^{n-1}} \right) *\left(P^{u,T}\left(A_{\mathcal{T}_2}\right)+2e^{-c_3*L_0*2^{n-1}} \right)\\
			\le &...\le \left(P^{u,T}\left(A_{0,0}\right)+2e^{-c_3*L_0} \right)^{2^n}.
		\end{split}
	\end{equation}

	Combine (\ref{5.2}), (\ref{4.8}), (\ref{5.11}) and (\ref{5.15}), \begin{equation}\label{5.16}
		P^{u,T}\left(A_{n,0} \right)\le \left(c_0*l_0^{2(d-1)} \right)^{2^n}*\left(c'*u^{-0.5}+(2L_0+1)^d*e^{-cL_0}+2e^{-c_3*L_0} \right)^{2^n}.   
	\end{equation}

    Since $L_0=\lfloor u^{\frac{1}{2d}}\rfloor$ and $l_0=10$, the sufficiently large $u$ will also give 
 \begin{equation}
		c_0*l_0^{2(d-1)}*\left(c'*u^{-0.5}+(2L_0+1)^d*e^{-cL_0}+2e^{-c_3*L_0}\right)<0.5.
	\end{equation}
Therefore, as $n\to \infty$, $P^{u,T}\left(A_{n,0} \right)$ converges to $0$. This implies  $P^{u,T}\left(0\xleftrightarrow[]{\mathcal{FI}^{u,T}} \partial\widetilde{B}_{n,0} \right)$ also converges to $0$, since $P^{u,T}\left(0\xleftrightarrow[]{\mathcal{FI}^{u,T}} \partial\widetilde{B}_{n,0} \right)\le P^{u,T}\left(A_{n,0} \right)$. So $\mathcal{FI}^{u,T}$ does not percolate in this case.
	
	In conclusion, \begin{equation}\label{5.19}
		\frac{-\log(1-p_d^c)}{2}\le \liminf\limits_{u\to \infty}u*T_c^-\le \limsup\limits_{u\to \infty}u*T_c^+\le \frac{-\log(1-p_d^c)}{2}.
	\end{equation}
	Then we get Theorem \ref{prop1} by (\ref{5.19}) and Corollary \ref{Tc}.
\qed

\subsection{Proof of Theorem \ref{prop2}:}

Before we give the proof of Theorem \ref{prop2}, we need an estimate on the diameter of the range of geometrically killed random walks.
\begin{lemma}\label{lemma2}
	Recall that $\{X_n^{(T)}\}$ is a geometrically killed random walk with law $P_0^{(T)}$. Then there exists $c_1,c_2>0$ such that for any $L>0$,  
	\begin{equation}
		P_0^{(T)}\left[\max_{0\le i\le \infty}|X_i^{(T)}|\ge L \right]\le c_1e^{-c_2*T^{-\frac{1}{3}}*L^{\frac{2}{3}}}.
	\end{equation}
\end{lemma}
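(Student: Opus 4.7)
The plan is to separate the control over the lifetime of the killed walk from the control over its spatial displacement. Let $Y$ denote the number of steps of the geometrically killed walk (so $Y$ is geometric with $P(Y\ge n)=(T/(T+1))^n$) and let $\{X_n\}_{n\ge 0}$ denote the underlying nearest-neighbor simple random walk on $\mathbb{Z}^d$ that the killed walk follows. For any threshold $M>0$, an elementary union bound gives
\begin{equation*}
P_0^{(T)}\!\left[\max_{0\le i\le\infty}|X_i^{(T)}|\ge L\right]\le P(Y\ge M)+P_0\!\left[\max_{0\le i\le M}|X_i|\ge L\right].
\end{equation*}

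I would control the first summand by the geometric tail estimate $P(Y\ge M)=(T/(T+1))^M\le\exp(-M/(T+1))$ (using $\log(1+1/T)\ge 1/(T+1)$), and the second summand by a maximal inequality for simple random walk. Each coordinate of $\{X_i\}$ is a martingale with increments bounded by $1$ in absolute value, so Azuma's inequality, combined with Doob's maximal inequality (or equivalently the reflection principle) and a union bound over the $d$ coordinates and the two signs, yields $P_0[\max_{i\le M}|X_i|\ge L]\le 4d\exp(-L^2/(2M))$.

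It then suffices to choose $M$ so that both summands are dominated by $\exp(-c_2 L^{2/3}T^{-1/3})$. Taking $M=\lfloor L^{2/3}T^{2/3}\rfloor$ does the job in the regime of interest: for $T\ge 1$ the first exponent becomes $M/(T+1)\ge cL^{2/3}T^{-1/3}$, while the second exponent equals $L^2/(2M)\asymp L^{4/3}T^{-2/3}$, which is at least $L^{2/3}T^{-1/3}$ as soon as $L\ge T^{1/2}$. In the complementary regime $L<T^{1/2}$ one has $L^{2/3}T^{-1/3}<1$, and enlarging the absolute constant $c_1$ makes the statement vacuous; for $T<1$ the geometric tail is even sharper, $P(Y\ge M)\le 2^{-M}$, so a trivial tweak of the choice of $M$ produces the same exponent without any new idea.

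The argument is essentially routine; the only genuine design choice is the power of $L$ and $T$ in the threshold $M$, dictated by balancing the geometric exponent $M/T$ against the diffusive exponent $L^2/M$. I expect no real obstacle beyond this bookkeeping: the exponent $L^{2/3}T^{-1/3}$ is exactly what emerges when one does \emph{not} optimize the split (a more careful balance at $M=L\sqrt{T}$ would in fact give the stronger exponent $L/\sqrt{T}$), but the form stated in the lemma is already adequate for the renormalization argument used later.
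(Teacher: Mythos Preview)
Your proposal is correct and follows essentially the same approach as the paper: split into the geometric lifetime tail and the random-walk displacement tail, and take the threshold $M=\lfloor (TL)^{2/3}\rfloor$. The only cosmetic difference is that the paper invokes Theorem~1.5.1 of Lawler for the displacement bound $P_0[\max_{i\le m}|X_i|\ge L]\le c'\exp(-L/\sqrt{m})$, which matches the exponent $L^{2/3}T^{-1/3}$ directly without the small case analysis $L\gtrless T^{1/2}$ that your Azuma-based bound $\exp(-L^2/(2M))$ requires.
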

\begin{proof}
	By Theorem 1.5.1 of \cite{lawler2013intersections}, for any positive integer $m$, we have \begin{equation}\label{5.25}
		\begin{split}
			P_0^{(T)}\left[\max_{0\le i\le \infty}|X_i^{(T)}|\ge L \right]\le& (1-\frac{1}{T+1})^{m}+P_0\left[\max_{0\le i\le m}|X_i|\ge L \right]\\
			\le & \exp(-c*T^{-1}m)+c'*\exp(-\frac{L}{\sqrt{m}}).
		\end{split}
	\end{equation}
Take $m=\lfloor(TL)^{\frac{2}{3}}\rfloor$ in (\ref{5.25}) and then we get Lemma \ref{lemma2}.
\end{proof}

By Theorem \ref{mono}, for any fixed $u>0$, $\mathcal{FI}^{u,T}$ is stochastically increasing on $T\in \left(0,1\right] $. Thus, it's sufficient to confirm: for $d\ge 3$ and $\epsilon>0$, there exists $\bar{c}(d,\epsilon)>0$ and $U''(d,\epsilon)>0$ such that for any $u<U''$ and $T\ge 1$ satisfying \begin{equation}\label{5.21}
		u\le \bar{c}(T*\left(\log(T+1)\right) ^{3+\epsilon})^{-(d-1)},
	\end{equation}
	$\mathcal{FI}^{u,T}$ does not percolate. In fact, the result proved here is sightly stronger than Theorem \ref{prop2} and the RHS of (\ref{5.21}) can be replaced by a polynomial of $T$ to make the proof a little shorter.
	
	We use the same approach as in the proof of Theorem \ref{prop1} with different $L_0$ and $l_0$. Precisely, we set $L_0=10$, $l_0=\lfloor\left(c'T*(\log(T+1))^{3+\epsilon}\right) ^{0.5}\rfloor$, where $c'$ will be determined later. Note that the number of paths intersecting $B_{0,0}$ in $\mathcal{FI}^{u,T}$ is a Poisson random variable with parameter $u*cap^{(T)}(B_{0,0})$. Therefore, \begin{equation}\label{5.24}
		\begin{split}
			P^{u,T}\left(0\xleftrightarrow[]{\mathcal{FI}^{u,T}} \partial B_{0,0} \right)\le& P^{u,T}\left(there\ exists\ at\ least\ one\ path\ intersecting\ B_{0,0} \right) \\
			\le& 1-\exp(-u*cap^{(T)}(B_{0,0}))\le c''*u.
		\end{split}
	\end{equation}
	
	 For any $k\in \mathbb{N}^+$, $0\le r< L_{n+1}$ and $L=k*L_{n+1}+r$, by Lemma \ref{lemma2}, we have \begin{equation}\label{422}
	 	\begin{split}
	 		P_0^{(T)}\left(\max_{0\le i\le \infty}|X_i^{(T)}|\ge L \right)\le & c_1\exp(-c_2'(d)k^{\frac{2}{3}}(c')^{\frac{1}{3}(n+1)}T^{\frac{1}{3}n}\left( \log(T+1)\right)^{\frac{1}{3}(3+\epsilon)(n+1)}).
	 	\end{split}
	 \end{equation}
	 
	 Recall the notation $F_{n,x}$ in the proof of Theorem \ref{prop1}. By (\ref{422}), when the constant $c'(d,\epsilon)$ is sufficiently large, for $u>0$ and $T\ge 1$ satisfying (\ref{5.21}), 
	 \begin{equation}\label{4.23}
	 	\begin{split}
	 		P^{u,T}\left((F_{n,x})^c\right)\le&\sum_{y\in (\hat{B}_{n,0})^c}P^{u,T}\left(there\ is\ no\ path\ starting\ from\ y\ and\ intersecting\ \widetilde{B}_{n,x}\right)\\
	 		\le &\sum_{L\ge L_{n+1}}c*L^{d-1}*\left[1- \exp(-\frac{2du}{T+1}*P_0^{(T)}\left( \max_{0\le i\le \infty}|X_i^{(T)}|\ge L\right) )\right]\\
	 		\le &\sum_{k=1}^{\infty}c\left[(k+1)L_{n+1}\right]^{d-1}*\frac{2du}{T+1}*L_{n+1}\\
	 		&*\exp(-c_2'k^{\frac{2}{3}}(c')^{\frac{1}{3}(n+1)}T^{\frac{1}{3}n}\left( \log(T+1)\right)^{\frac{1}{3}(3+\epsilon)(n+1)})\\
	 		\le&c*u*(L_{n+1})^d*\exp(-c_2''(d)*(c')^{n+1}*T^{\frac{1}{3}n}*\left( \log(T+1)\right) ^{\frac{1}{3}(3+\epsilon)(n+1)})\\
	 		\le &\exp(-c_2'''(d)*c'*\left( \log(T+1)\right) ^{\frac{1}{3}(3+\epsilon)}*2^n).
	 	\end{split}
	 \end{equation} 

	Similar to (\ref{5.16}), by (\ref{5.24}) and (\ref{4.23}), we have \begin{equation}
		\begin{split}
			P^{u,T}\left(0\xleftrightarrow[]{\mathcal{FI}^{u,T}} \partial \widetilde{B}_{n,0} \right)\le& \left[c_0l_0^{2(d-1)}*\left(2\exp(-c_2'''*c'*\left( \log(T+1)\right) ^{\frac{1}{3}(3+\epsilon)})+c''*u\right)\right] ^{2^n}. 
		\end{split}
	\end{equation}

	 Take $\bar{c}=(4c_0c'')^{-1}$ and $c'(d,\epsilon)>0$ large enough such that for any $T\ge 1$, \begin{equation}
	 	c_0l_0^{2(d-1)}*2\exp(-c_2'''*c'*\left( \log(T+1)\right) ^{\frac{1}{3}(3+\epsilon)})<0.25.
	 \end{equation} 
 
 Therefore, when $u$ is sufficiently small and $T\ge 1$ satisfies (\ref{5.21}), $$\lim\limits_{n\to \infty}P^{u,T}\left(0\xleftrightarrow[]{\mathcal{FI}^{u,T}} \partial \widetilde{B}_{n,0} \right)= 0$$ and thus $\mathcal{FI}^{u,T}$ does not percolate.
\qed

\section*{Acknowledgement}

The authors would like to thank Drs. Xinyi Li, Eviatar B. Procaccia, and Ron Rosenthal for fruitful discussions. The authors would like to also thank Dr. Eviatar B. Procaccia for allowing us to use his LaTeX macros.

\bibliographystyle{plain}
\bibliography{ref}
\end{document}